\title{On the algebraic properties of exponentially stable integrable hamiltonian systems}
\author{Santiago Barbieri\footnote{D\'epartement de Math\'ematiques d'Orsay - Universit\'e Paris Saclay, B\^at. 307, Rue Michel Magat, 91400 Orsay, France\\ 
		Dipartimento di Matematica e Fisica - Universita degli Studi Roma Tre, Largo San Leonardo Murialdo, 1, Palazzina C, 00146 Roma, Italy
		\\ This work has partially been developed under the auspices of the European Research Council in the framework of the H2020-ERC Starting Grant 2015 project 677793: {\itshape Stable and Chaotic Motions in the Planetary Problem.}} }
\date{}
\begin{document}
  \maketitle
  \section*{Abstract}
  Steepness is a geometric property which, together with complex-analyticity, is needed in order to insure stability of a near-integrable hamiltonian system over exponentially long times. Following a strategy developed by Nekhoroshev, we construct sufficient algebraic conditions for steepness for a given function that involve algebraic equations on its derivatives up to order five. The underlying analysis suggests some interesting considerations on the genericity of steepness and represents a first step towards the construction of sufficient conditions for steepness involving the derivatives of the studied function up to an arbitrary order. 
\section{Introduction}
Hamiltonian formalism is the natural setting appearing in the study of many physical systems. In the simplest case, we consider the motion of a point on a Riemannian manifold $\mathcal{M}$, called configuration manifold, governed by Newton's second law ($\ddot{q}=-\nabla U(q)$ for a potential function $U$ in the euclidean case, with $q$ a system of local coordinates for $\mathcal{M}$). This system can be transformed by duality thanks to Legendre's transformation and reads 
$$
\dot{p}=-\partial_q H(p,q)\quad , \qquad \dot{q}=\partial_p H(p,q) \quad,
$$
where $H(p,q)$ is a real differentiable function on the cotangent bundle $T^* \mathcal{M}$, classically called hamiltonian, and $p$ is the coordinate conjugated to $q$. Systems integrable by quadrature are an important class of hamiltonian systems. By the classical Liouville-Arnol'd Theorem, under general topological assumptions, an integrable system depending on $2n$ variables ($n$ degrees of freedom) can be conjugated to a hamiltonian system on the cotangent bundle of the $n$-dimensional torus $\mathbb{T}^n$, whose equations of motion take the form 
$$
\dot{I}=-\partial_\vartheta h(I)=0\quad , \qquad \dot{\vartheta}=\partial_I h(I) \quad,
$$
where $(I,\vartheta)\in \mathbb{R}^n\times \mathbb{T}^n $ are called action-angle coordinates. Therefore, the phase space for an integrable system is foliated by invariant tori carrying the linear motions of the angular variables (called quasi-periodic motions). Integrable systems are exceptional, but many important physical problems are governed by hamiltonian systems which are close to integrable. 
  Namely, the dynamics of a near-integrable hamiltonian system is described by a hamiltonian function whose form in action-angle coordinates $(I,\vartheta)\in \mathbb{R}^n\times \mathbb{T}^n$ reads
$$
H(I,\vartheta):=h(I)+\varepsilon f(I,\vartheta)\ ,
$$
where $\varepsilon$ is a small parameter. The structure of the phase space for this kind of systems can be inferred with the help of Kolmogorov-Arnol'd-Moser (KAM) theory. Namely, under a general non-degeneracy condition for $h$, a Cantor set of positive measure of invariant tori carrying quasi-periodic motions for the integrable flow persists under a suitably small perturbation (see e.g. ref. \cite{Arnold_2010}, \cite{Chierchia_2008}).
\newline
For systems with three or more degrees of freedom, KAM theory yields little information about trajectories lying in the complementary of such Cantor set, where instabilities can occur (see e.g. ref. \cite{Arnold_1964}). However, in a series of articles published during the seventies (see ref. \cite{Nekhoroshev_1977}, \cite{Nekhoroshev_1979}) Nekhoroshev proved an effective result of stability for an open set of initial conditions holding over a time which is exponentially long in the inverse of the size $\varepsilon$ of the perturbation, provided that the hamiltonian is analytic and that its integrable part satisfies a generic transversality property known as {\it steepness}.
\newline
From a more technical point of view, steepness is defined as follows:
\newtheorem{steepness}{Definition}
\begin{steepness}
Let $\mathcal{A}$ be an open set of $\mathbb{R}^n$ and $h:\mathcal{A}\longrightarrow \mathbb{R}$ a smooth function. $h$ is {\itshape steep} at $I:=(I_1,...,I_n)\in \mathcal{A}$ if 
$
\nabla h (I)\neq 0
$
and if, for any $m=1,...,n-1$, there exist constants $C_m>0$, $\delta_m>0$ and $\alpha_m>1$ such that, for all $m$-dimensional affine subspace $\Lambda^I_m$ orthogonal to $\nabla h(I)$, the gradient of the restriction of $h$ to $\Lambda^I_m$, which we denote with $\nabla (h|_{\Lambda^I_m})$, satisfies
\begin{equation}
\max_{0\leq \eta\leq \xi}\left(\min_{I^{'}\in \Lambda^I_m,\ ||I-I^{'}||=\eta}||\nabla (h|_{\Lambda^I_m})(I^{'})||\right)>C_m \xi^{\alpha_m}\ ,\ \ \forall \xi \in (0,\delta_m]\ .
\end{equation}
\end{steepness}
The constants $C_m$ and $\delta_m$ are called the steepness coefficients of $h$, whereas the $\alpha_m$ are its steepness indices. In particular, in the analytic case, a function is steep if and only if, on any affine hyperplane $\Lambda^I_m$, there exists no curve $\gamma$  with one endpoint in $I$ such that the restriction $\nabla (h|_{\Lambda^I_m})$ identically vanishes on $\gamma$, as is showed in ref. \cite{Niederman_2006}. From a heuristic point of view, for any value $m\in\{1,...,n-1\}$ the gradient $\nabla h$ must "bend" towards $\Lambda_m^I$ when "travelling" along the curve $\gamma\in \Lambda_m^I$, so that critical points for the restriction of $h$ to $\Lambda^I_m$ must not accumulate (see ref. \cite{Niederman_2006}). Finally, $h$ is said to be steep in a given domain if it is steep at each point of such set with uniform indices and coefficients.
\newline
With such notion, Nekhoroshev's effective result of stability reads
\newtheorem{nek}{Theorem (Nekhoroshev, 1977)}
\begin{nek}
Consider a near-integrable system with hamiltonian $
H(I,\vartheta):=h(I)+\varepsilon f(I,\vartheta)
$ analytic in some complex neighborhood of $B_r\times \mathbb{T}^n$, where $B_r$ is the open ball of radius $r$ in $\mathbb{R}^n$, and suppose $h$ steep. Then there exist positive constants $a,b,\varepsilon_0, C_1,C_2$ such that, for any $\varepsilon\in [0,\varepsilon_0)$ and for any initial condition not too close from the boundary, one has
$
|I(t)-I(0)|\leq C_2\varepsilon^a
$
for any time $t$ satisfying
$
|t|\leq C_1\exp\left(\varepsilon^{-b}\right)\ .
$
\end{nek}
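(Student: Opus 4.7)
The plan is to combine an analytic normal-form construction with a geometric argument exploiting steepness. First, I would perform successive canonical transformations on small neighborhoods in action space to bring the Hamiltonian into a resonant normal form
$$ H_\Lambda(I,\vartheta)= h(I)+\varepsilon g_\Lambda(I,\vartheta)+\varepsilon R(I,\vartheta), $$
where $g_\Lambda$ contains only Fourier modes belonging to a resonance module $\Lambda\subset \mathbb{Z}^n$ and the remainder $R$ has size $\exp(-c\varepsilon^{-b})$. The analyticity of $H$ in a complex neighborhood of $B_r\times \mathbb{T}^n$ is crucial: truncating the Fourier expansion at order $\sim 1/\varepsilon^b$ yields an exponentially small tail thanks to the decay of Fourier coefficients of analytic functions, and iterating Lie-series averaging a comparable number of times drives the non-resonant part of the perturbation down to the same exponentially small order. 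Each averaging step amounts to solving a cohomological equation, whose small-divisor issues are controlled by imposing a Diophantine-type condition that defines the neighborhood where the transformation is performed.

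Next, I would organize the action ball into a \emph{geography of resonances}: for each $\ell \in \{0,1,\ldots,n-1\}$, partition $B_r$ into zones $\mathcal{N}_\ell$ where exactly $\ell$ independent resonances $\langle \mathbf{k},\nabla h(I)\rangle\approx 0$ with $\mathbf{k}\in\mathbb{Z}^n$ of moderate norm are (approximately) satisfied. On each $\mathcal{N}_\ell$ the normal-form part $g_\Lambda$ depends on only $\ell$ linear combinations of the angles; consequently, up to the exponentially small remainder, the equations of motion preserve the $n-\ell$ actions conjugate to the non-resonant angles, so the trajectory can drift only in an $\ell$-dimensional plane parallel to the resonance module $\Lambda$.

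It is here that steepness enters decisively. By construction the fast-drift plane is nearly orthogonal to $\nabla h(I)$, so it can be compared to an affine subspace $\Lambda^I_m$ of the kind appearing in the definition of steepness with $m=\ell$. The steepness inequality forces $\|\nabla(h|_{\Lambda^I_m})(I')\|$ to exceed $C_m\xi^{\alpha_m}$ somewhere within distance $\xi$ of $I$, and combining this lower bound with the conservation of $H$ along the motion produces an effective confining wall: the trajectory cannot travel a distance $\xi$ in the fast-drift plane without violating energy conservation, unless $\varepsilon \gtrsim \xi^{\alpha_m}$, giving a stability radius $\xi\lesssim \varepsilon^a$ with $a=a(\alpha_1,\ldots,\alpha_{n-1})$. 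The exponentially small remainder $R$ produces an $O(1)$ drift only after times $|t|\leq C_1\exp(\varepsilon^{-b})$, which fixes the time scale of the theorem.

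The main obstacle, and the delicate heart of the argument, is to arrange the geometric scheme so that the zones $\mathcal{N}_\ell$ cover a neighborhood of the initial action in a consistent hierarchical way and so that any trajectory leaving a zone of multiplicity $\ell$ can only enter zones of strictly smaller multiplicity, where an analogous estimate applies and the induction closes. This requires a careful choice of the sizes of the resonant zones, of the Fourier cut-off, and of the number of averaging steps as coupled powers of $\varepsilon$, essentially matching the analytic smallness $\exp(-\varepsilon^{-b})$ against the geometric confinement $\xi^{\alpha_m}$ produced by steepness. Optimizing these coupled parameters is precisely what determines the exponents $a$ and $b$ as explicit functions of the steepness indices $\alpha_1,\ldots,\alpha_{n-1}$.
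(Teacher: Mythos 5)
The paper does not prove this statement: it is Nekhoroshev's original stability theorem, quoted as background and attributed to refs.\ \cite{Nekhoroshev_1977} and \cite{Nekhoroshev_1979}, so there is no in-paper proof to compare against. Your sketch faithfully reproduces the standard two-part structure of the classical argument (analytic normal forms with exponentially small remainders, plus the geography of resonances in which steepness confines the fast drift along resonant planes and a hierarchical covering closes the induction), and you correctly identify the delicate points (the non-re-entry into zones of higher multiplicity and the coupled choice of cut-offs) that the original proof must resolve.
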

Such result also holds under the weaker regularity assumption that the hamiltonian is in the Gevrey class (see ref. \cite{Marco_Sauzin_2002}) and by requiring steepness to be verified only on those subspaces which are spanned by integer vectors satisfying suitable arithmetic conditions (see refs. \cite{Guzzo_Morbidelli_1997}, \cite{Niederman_2007}). However, one cannot get completely rid of the steepness hypothesis since examples of instability over times of order $1/\varepsilon$ may be constructed in case such property is not satisfied on a subspace spanned by integer vectors (see ref. \cite{Niederman_2006}, \cite{Bounemoura_Kaloshin_2014}).
Therefore, a crucial step in order to establish stability over exponentially long times for a near-integrable hamiltonian system consists in building a suitable steep integrable approximation. This aspect is important when trying to apply Nekhoroshev's estimates to concrete examples, as it is shown for example in refs. \cite{Bounemoura_Fayad_Niederman_2017} and \cite{Pinzari_2013}. As we shall see, the steepness property is generic, both in measure and topological sense. However, since its definition is not constructive, it is difficult to directly establish wether a given function is steep or not. Fortunately, Nekhoroshev provided in \cite{Nekhoroshev_1979} a scheme which, in principle, allows to deduce explicit sufficient algebraic conditions for steepness involving the derivatives of the studied function up to an arbitrary order. In particular, let us define the $r$-jet $P_I(h,r,n)$ of a smooth function $h$ of $n$ variables at $I$ as the vector containing all the coefficients of the Taylor polynomial of $h$ at $I$ up to order $r$, with the exception of the constant term, namely
$$
P_I(h,r,n):=\left\{\frac{1}{\mu !}\frac{\partial^\mu h}{\partial I^\mu},\ 1 \leq |\mu|\leq r\right\}\ ,
$$
where $\mu:=(\mu_1,...\mu_n)$ is a multi-index of naturals and $|\mu|=\sum_{i=1}^n\mu_i$.
\newline
With this definition, one can pass to the quotient in the set of smooth functions and consider a representative of the class of smooth functions of $n$ variables having the same $r$-jet at $I$. We also denote with $\mathcal{P}_I(r,n)$ the polynomial space of the $r$-jets of smooth functions of $n$ variables calculated at $I$.  Nekhoroshev showed that, for any $r\geq 2,$ one can construct a semi-algebraic set whose closure contains the $r$-jets of all non-steep functions with non-zero gradient at $I$.
Namely, we have the following 
\newtheorem{nek2}[nek]{Theorem (Nekhoroshev, 1979)}
\begin{nek2}\label{nek2}
For any $n\geq 2$ and $r\geq 2$, there exists a semi-algebraic set $\sigma_n^r(I)\subset \mathcal{P}_I(r,n)$, whose closure is denoted with $\Sigma_n^r(I)$, such that any given function $h$ satisfying:
\begin{enumerate}
\item $h\in C^{2r-1}$ in a neighborhood of $I$\ ,
\item  $\nabla h(I)\neq 0$\ ,
\item $P_I(h,r,n)\in \mathcal{P}_I(r,n)\backslash \Sigma_n^r(I)$\ ,
\end{enumerate}
is steep in some neighborhood of $I$. 
\newline
Moreover, for any $m=1,...,n-1$, one has 
\begin{equation}\label{codimension}
\text{codim }\Sigma_n^r\geq 
\begin{cases}
\max\left\{0,r-1-\displaystyle\frac{n(n-2)}{4}\right\},\ \text{if $n$ is even}\\
\max\left\{0,r-1-\displaystyle\frac{(n-1)^2}{4}\right\},\ \text{if $n$ is odd}\\
\end{cases}
\end{equation}
and the steepness indices $\alpha_m$ of $h$ are superiorly bounded by 
\begin{equation}\label{boundam}
\bar{\alpha}_m:=
\begin{cases}
\max\left\{1,2r-3-\displaystyle\frac{n(n-2)}{2}+2m(n-m-1)\right\},\ \text{if $n$ is even}\\
\max\left\{1,2r-3-\displaystyle\frac{(n-1)^2}{2}+2m(n-m-1)\right\},\ \text{if $n$ is odd}
\end{cases}\ .
\end{equation}
\end{nek2}
As Nekhoroshev points out in his discussion, such result implies a stratification in the space of jets: the strata $\Sigma_n^r(I)$, with $r\geq 2$, are semialgebraic sets of increasing codimension. Hence, as expression \eqref{codimension} shows, for fixed $n$ and sufficiently high $r$, the steepness property is generic in $\mathcal P_I(r,n)$. Moreover, for fixed values of $r$ and $n$, in addition to non-steep functions, also all steep functions with steepness indices greater than $\bar{\alpha}_m$ are contained in the stratus $\Sigma_n^r(I)$. In other words, for increasing values of $r$, the complementary of $\Sigma_n^r(I)$ contains more and more jets of steep functions and the steepness indices of such functions are superiorly bounded by a quantity $\bar{\alpha}_m$ which increases linearly with $r$. A way to obtain sufficient conditions for steepness in the space of jets at a fixed order $r$ consists therefore in knowing the explicit form of the stratus $\Sigma_n^r(I)$ or the form of some closed set containing it: a function whose $r$-jet lies outside such set is steep. The sets $\Sigma^2_n(I)$, for any $n\geq 2$, have been explicitly described by Nekhoroshev in references \cite{Nekhoroshev_1977} and \cite{Nekhoroshev_1979}. Before stating Nekhoroshev's results, we denote with
$$
h_I^k[v^1,...,v^k]=\sum_{i_1,...,i_k=1}^n \frac{\partial^k h}{\partial I_{i_1}...\partial I_{i_k}}(I)v^1_{i_1}...v^k_{i_k}
$$
the $k$-th order multilinear form corresponding to the $k$-th coefficient of the Taylor expansion of a function $h$ which is $k$-times continuously differentiable in a neighborhood of $I$. We also give the following
\newtheorem{rjetcondition}[steepness]{Definition}
\begin{rjetcondition}\label{rjetcondition}
	For $r\in\mathbb{N}$, $r\ge 2$, a function $h$ of class $C^r$ in a neighborhood of a point $I$ is said to be $r$-jet non-degenerate if the system
	$$
	h^1[v]=0\ ;\ \ h^2[v,v]=0\ ,\ \ ...\ ,\ \ h^r[v,...,v]=0
	$$
	admits only the trivial solution $v=0$. If this is not the case, $h$ is said to be $r$-jet degenerate.
\end{rjetcondition}
With such setup, Nekhoroshev proved that, in the space of jets of order two, one has $P_I(h,2,n)\in\mathcal{P}_I(2,n)\backslash\Sigma_n^2(I)$ if and only if $h$ is two-jet degenerate. Such condition is equivalent to requiring that $h$ is quasi-convex (i.e. convex on level sets) at $I$ and Theorem \ref{nek2} implies that all quasi-convex functions in $C^{3}$ class around a non-critical point $I$ are steep in a neighborhood of such point. 
In a similar way, Nekhoroshev found a sufficient condition for steepness involving the derivatives of order three: namely, if a function $h\in C^{5}$ around a non-critical point $I$ is three-jet non-degenerate at $I$,
then $h$ is steep in a neighborhood of such point. As we shall see in subsection \ref{discussion2}, such result is more general than the conditions that can be inferred on the jets of order three by simply following the scheme of Theorem \ref{nek2}, since it applies to a wider set of functions. Its proof is not found in Nekhoroshev's works (see refs. \cite{Nekhoroshev_1973} and \cite{Nekhoroshev_1979}) and it has been explicitly written in an analytic way in \cite{Chierchia_Faraggiana_Guzzo_2019} for systems with any number of degrees of freedom. As we shall show in subsection \ref{discussion2}, the fact that the three-jet non degeneracy of a given function depending on $n=2,3,4,5$ coordinates implies its steepness comes out as a straightforward corollary of the algebraic structure of the equations that define the sets $\Sigma_n^r(I)$, for $r=4,5$ and $n=2,3,4,5$. Following such discussion, we conjecture that the algebraic properties of the sets $\Sigma_n^r(I)$, for any values of $r$ and $n$, can be used to prove the steepness of all three-jet non-degenerate functions depending on an arbitrary number of variables; this would constitute an alternative proof to the one used in \cite{Chierchia_Faraggiana_Guzzo_2019}.
\newline
However, the algebraic form of the strata $\Sigma_n^r(I)$, for $r\geq 4$, cannot be expressed so straightforwardly as in the cases $r=2,3$. In \cite{Schirinzi_Guzzo_2013} the authors were able to build closed sets containing the strata $\Sigma_n^4(I)$ for $n=2,3,4$ by exploiting Nekhoroshev's strategy. For $n\geq 5$ and $r=4$, on the other hand, Nekhoroshev's scheme turns out not to be helpful since it yields conditions for steepness which are stronger than three-jet non degeneracy. 
\newline
In this work, we develop the scheme in \cite{Schirinzi_Guzzo_2013} and we build closed sets containing $\Sigma_n^5(I)$ for $n=2,3,4,5$. This allows us to formulate new explicit conditions for steepness involving the five-jet of a given function. Similarly to the case considered in \cite{Schirinzi_Guzzo_2013}, the constraints we find are useful only in the case of systems with $n=2,3,4,5$ degrees of freedom, as we shall discuss in subsection \ref{ngeq6}. Moreover, we slightly modify the construction in \cite{Schirinzi_Guzzo_2013} so to get rid of some hypotheses of non-degeneracy on the hessian matrix of the function whose steepness is being tested. Furthermore, this work can be seen as a first step towards the formulation of sufficient conditions for steepness in the space of jets of arbitrary order. Indeed, a comparison on the equations defining the 'bad' sets $\sigma_n^r(I)$ defined in Theorem \ref{nek2}, for $r=4,5$, suggests hints on the algebraic structure of $\sigma_n^r(I)$ for any value of $r$, which shall be studied in detail in a further work. By formula \eqref{codimension}, this would allow to obtain generic conditions for steepness for functions depending on an arbitrary number of degrees of freedom. Actually, if the explicit expression of the sets $\sigma_r^n(I)$ were known for all $r,n\in \mathbb{N}$, for any fixed value of $n$ one should then simply find the minimal order $r^*$, depending on $n$, for which the codimension of the bad set $\sigma_n^r(I)$ is positive. At that point, steepness would be generic in the space of jets of order $r^*$ and a way to test steepness of a given function would be to see if its $r^*$-jet belongs to the complementary of the closure of $\sigma_n^{r^*}(I)$.

This paper is organized as follows: in section \ref{results} we state our results, whereas in section \ref{examples} we test such conditions on a couple of polynomial examples. Section \ref{nekhoroshev} is dedicated to an overview on Nekhoroshev abstract strategy to construct sets $\sigma_n^r(I)$, section \ref{proofs} contains the proofs of the statements in section \ref{results} and, finally, section \ref{discussion} contains some remarks and a short discussion on the possible developements of this work. 
\section{Results}\label{results}

Below, we state our results separately for each of the possible values of the number of degrees of freedom $n$.
\\
 As a matter of notation, for fixed $n$ and for any collection of $m\in\{1,...,n-1\}$ vectors $v_1,...,v_m$ in $\mathbb{R}^n$, we shall indicate with $rk(v_1,...,v_m)$ the linear rank of the matrix $(v_1,...,v_m)$ generated by such collection. 
\newline 

For $n=2$ we have
\newtheorem{n2}[nek]{Theorem}
\begin{n2}\label{n2}
Let $\mathcal{A}$ an open set of $\mathbb{R}^2$ and $h:\mathcal{A}\longrightarrow \mathbb{R}$ a smooth function. Let $I\in\mathcal{A}$ a point such that $\nabla h(I)\neq 0$.
If $h$ is five-jet non-degenerate at $I$, then $h$ is steep in some neighborhood of $I$. 
\end{n2} 
For $n=3$ we have 
\newtheorem{n3}[nek]{Theorem}
\begin{n3}\label{n3}
Let $\mathcal{A}$ an open set of $\mathbb{R}^3$ and $h:\mathcal{A}\longrightarrow \mathbb{R}$ a smooth function. Let $I\in\mathcal{A}$ a point such that $\nabla h(I)\neq 0$. 
If 
\begin{enumerate}
\item $h$ is five-jet non-degenerate at $I$;
\item for any $v\neq 0$ such that $h$ is three-jet degenerate at $I$,
any vector $u$ solving system
$$
h^1_I[u]=0\ ;\ \
h^2_I[u,v]=0\ ; \ \
h^2_I[u,u]h_I^4[v,v,v,v]=3(h_I^3[v,v,u])^2\\
$$
satisfies $rk(u,v)<2$;
\end{enumerate}
then $h$ is steep in some neighborhood of $I$. 

\end{n3}
For $n=4$ we have
\newtheorem{n4}[nek]{Theorem}
\begin{n4}\label{n4}
Let $\mathcal{A}$ an open set of $\mathbb{R}^4$ and $h:\mathcal{A}\longrightarrow \mathbb{R}$ a smooth function. Let $I\in\mathcal{A}$ a point such that $\nabla h(I)\neq 0$. 
\newline
If 
\begin{enumerate}
\item $h$ is five-jet non-degenerate at $I$;
\item for all $v\neq 0$ such that
$h$ is three-jet degenerate at $I$,
any vector $u$ solving system
\begin{align}\label{n4m2}
\begin{split}
\begin{cases}
h^1_I[u]=0\ ;\ \
h^2_I[u,v]=0\ ;\ \
h^2_I[u,u]h^4_I[v,v,v,v]=3(h^3_I[v,v,u])^2\\
15(h_I^3[v,v,u])^2h_I^3[u,u,v]+h_I^5[v,v,v,v,v](h_I^2[u,u])^2\\
\ \ =10h_I^4[v,v,v,u]h_I^3[u,v,v]h_I^2[u,u]
\end{cases}
\end{split}
\end{align}
satisfies $rk(u,v)<2$;
\item for all $v\neq 0$ such that $h$ is three-jet degenerate at $I$,
any couple of vectors $(u,w)$ solving
\begin{align}\label{n4m3}
\begin{split}
h_I^1[u]=0\ ;\ \
h_I^1[w]=0\ ;\ \
h_I^2[u,v]=0\ ;\ \
h_I^2[w,v]=0
\end{split}
\end{align}
satisfies $rk(u,v,w)<3\ ;$
\end{enumerate}
then $h$ is steep in some neighborhood of $I$. 
\end{n4}
For $n=5$ we have
\newtheorem{n5}[nek]{Theorem}
\begin{n5}\label{n5}
Let $\mathcal{A}$ an open set of $\mathbb{R}^5 $ and $h:\mathcal{A}\longrightarrow \mathbb{R}$ a smooth function. Let $I\in\mathcal{A}$ a point such that $\nabla h(I)\neq 0$. 
\newline
If 
\begin{enumerate}
\item $h$ is four-jet non-degenerate at $I$;
\item for all $v\neq 0$ such that
$h$ is three-jet degenerate at $I$,
any vector $u$ solving
\begin{align}\label{n5m2}
\begin{split}
\begin{cases}
h^1_I[u]=0\ ;\ \
h^2_I[u,v]=0\ ;\ \
h^2_I[u,u]h_I^4[v,v,v,v]=3(h_I^3[v,v,u])^2\\
15(h_I^3[v,v,u])^2h_I^3[u,u,v]+h_I^5[v,v,v,v,v](h^2_I[u,u])^2\\
\ \ =10h^4_I[v,v,v,u]h_I^3[u,v,v]h_I^2[u,u]
\end{cases}
\end{split}
\end{align}
satisfies $rk(u,v)<2$;
\item for all $v\neq 0$ such that
$h$ is three-jet degenerate at $I$,
any couple of vectors $(u,w)$ solving
\begin{align}\label{n5m3}
\begin{split}
\begin{cases}
h^1_I[u]=0\ ;\ \
h^1_I[w]=0\ ;\ \
h^2_I[u,v]=0\ ;\ \
h^2_I[w,v]=0\\
\{h_I^4[v,v,v,v]h_I^2[u,u]-6(h_I^3[u,v,v])^2\}\{h^2_I[w,w]h^2_I[u,u]-(h^2_I[u,w])^2\}\\
+12h^3_I[u,v,v]h^3_I[v,v,w]h^2_I[u,u]h^2_I[u,w]-6(h_I^3[u,v,v])^2(h_I^2[u,w])^2\\
-6(h_I^3[v,v,w])^2(h_I^2[u,u])^2=0
\end{cases}
\end{split}
\end{align}
satisfies $rk(u,v,w)<3\ ;$
\item for all $v\neq 0$ such that 
$h$ is two-jet degenerate at $I$,
any triplet of vectors $(u,w,x)$ solving
\begin{align}\label{n5m4}
\begin{split}
\begin{cases}
h^1_I[u]=0\ ;\ \
h^1_I[w]=0\ ;\ \
h^1_I[x]=0\\
h^2_I[u,v]=0\ ;\ \
h^2_I[w,v]=0\ ;\ \
h^2_I[x,v]=0
\end{cases}
\end{split}
\end{align}
satisfies $rk(u,w,x,v)<4\ ;$
\end{enumerate}
then $h$ is steep in some neighborhood of $I$. 
\end{n5}
\section{Examples}\label{examples}
In this section, we test our results on some polynomial examples.
\newtheorem{ex1}{Example}
\begin{ex1}
The function 
\begin{equation}
h(I)=h(I_1,I_2,I_3,I_4)=\frac{I_2^5}{5}+\frac{I_1^3}{3}-\frac{I_1^2}{2}+\frac{I_1I_2}{2}-\frac{I_3^2}{2}-I_4
\end{equation} 
is steep in a neighborhood of the origin $I=0$.
\end{ex1}
\begin{proof}
We start be remarking that such function is three-jet and four-jet degenerate at the origin on those vectors $v\neq 0$ of the form 
\begin{equation}\label{v}
v:=(v_1,v_2,v_3,v_4)=(0,v_2,0,0)\ ,
\end{equation}  
so that neither Nekhoroshev explicit algebraic conditions for steepness, nor theorems in ref. \cite{Schirinzi_Guzzo_2013} apply.
However, the claim can be proven by applying Theorem \ref{n4}. Indeed, it is easy to see that  $h$ is five-jet non-degenerate at $I=0$. Moreover, system (\ref{n4m2}) reads
\begin{equation}
u_4=0\ ;\ \
u_1v_1+u_3v_3-\frac{1}{2}u_1v_2-\frac{1}{2}u_2v_1=0\ ;\ \
v_2^5(u_1^2+u_3^2-u_1u_2)^2=0
\end{equation}
and, by taking expression (\ref{v}) into account, one has that the only non-null solution is given by vectors of the kind $u=(0,u_2,0,0)$, which satisfy $rk(u,v)<2$.
\newline
Finally, system (\ref{n4m3}) reads
\begin{equation}
\begin{cases}
u_4=w_4=0\\
u_1v_1+u_3v_3-\frac{1}{2}u_1v_2-\frac{1}{2}u_2v_1=0\ ;\ \
w_1v_1+w_3v_3-\frac{1}{2}w_1v_2-\frac{1}{2}w_2v_1=0
\end{cases}
\end{equation}
and, by taking expression (\ref{v}) again into account, the only possible solutions are two families of vectors of the kind $u=(0,u_2,u_3,0)$ and $w=(0,w_2,w_3,0)$, which satisfy $rk(u,v,w)<3$.
Therefore, the hypotheses of Theorem \ref{n4} are fulfilled and the proof is concluded.
\end{proof}
\newtheorem{ex2}[ex1]{Example}
\begin{ex2}
The function 
\begin{equation}
h(I)=h(I_1,I_2,I_3,I_4,I_5)=\frac{I_4^4}{4}+\frac{I_5^4}{4}+\frac{I_3^3}{3}+\frac{I_3I_2^2}{2}-\frac{I_1^2}{2}-\frac{I_3^2}{2}-\frac{I_5^2}{2}+I_3I_4+I_2
\end{equation}
is steep in a neighborhood of the origin $I=0$.
\end{ex2}
\begin{proof}
We start by remarking that such function is two-jet degenerate at the origin on those vectors $z\neq 0$ of the form 
\begin{equation}\label{z}
z:=(z_1,0,z_3,z_4,z_5)
\end{equation}
whose coordinates satisfy  
\begin{equation}\label{zz}
z_1^2+z_3^2+z_5^2-2z_3z_4=0\ ,
\end{equation}
and three-jet degenerate on those vectors $v\neq 0$ of the kind 
\begin{equation}\label{vv}
v:=(0,0,0,v_4,0)\ .
\end{equation}
Therefore, Nekhoroshev's non-degeneracy conditions on the jets of order two and three are helpless in this case. Moreover, since such function has five degrees of freedom, the results in ref. \cite{Schirinzi_Guzzo_2013} cannot be used (they only hold for $n=2,3,4$).
However, the claim can be proven by making use of Theorem \ref{n5}. First, it is easy to see that $h$ is four-jet non-degenerate at the origin. Moreover, by taking expression (\ref{vv}) into account, system (\ref{n5m2}) reads
\begin{equation}
u_2=0\ ;\ \ 
u_3=0\ ;\ \ 
u_1^2+u_5^2=0
\ ,
\end{equation}
which is solved by vectors of the kind $u=(0,0,0,u_4,0)$, that depend linearly on $v$. On the other hand, system (\ref{n5m3}) has the form
\begin{equation}\label{uw}
u_2=0\ ;\ \ 
w_2=0\ ;\ \ 
u_3=0\ ;\ \ 
w_3=0\ ;\ \
(u_1^2+u_5^2)(u_1w_5-u_5w_1)^2=0\ ,
\end{equation}
where the particular form (\ref{vv}) of vector $v$ has been taken into account once again. \newline
There are four possible cases
\begin{enumerate}
\item $u_1^2+u_5^2=0$, which, by system (\ref{uw}), implies $u_1=u_5=0$, so that vector $u$ is of the kind $u=(0,0,0,u_4,0)$, which is parallel to $v$;
\item $u_1=0,u_5\neq 0$, which, by the last equation in (\ref{uw}), implies $w_1=0$, so that $u$ and $w$ have the form $u=(0,0,0,u_4,u_5)$ and $w=(0,0,0,w_4,w_5)$, so that $rk(u,v,w)<3$;
\item $u_1\neq 0, u_5=0$ which is similar to the previous point and yields $u,w$ of the kind $u=(u_1,0,0,u_4,0),w=(w_1,0,0,w_4,0)$, so that $rk(u,v,w)<3$;
\item $u_1\neq 0,u_5\neq 0$ which, by system (\ref{uw}), yields $u_5w_1-u_1w_5=0$. Therefore, one has
$$
\det
\left(
\begin{matrix}
0 & 0 & 0 & v_4 & 0\\
u_1 & 0 & 0 & u_4 & u_5\\
w_1 & 0 & 0 & w_4 & w_5\\
0 & 1 & 0 & 0 & 0\\
0 & 0 & 1 & 0 & 0\\
\end{matrix}
\right)=v_4(u_5w_1-u_1w_5)=0
$$
so that, since 
$$
rk
\left(
\begin{matrix}
0 & 0 & 0 & v_4 & 0\\
0 & 1 & 0 & 0 & 0\\
0 & 0 & 1 & 0 & 0\\
\end{matrix}
\right)=3\ ,
$$
one must have $rk(u,w)<2$ and, consequently, $rk(u,v,w)<3$.
\end{enumerate}
Finally, system (\ref{n5m4}) reads
\begin{equation}
\begin{cases}
u_2=0\ ;\ \ 
w_2=0\ ;\ \ 
x_2=0\\
u_1z_1+u_3z_3+u_5z_5-u_3z_4-u_4z_3=0\\
w_1z_1+w_3z_3+w_5z_5-w_3z_4-w_4z_3=0\\
x_1z_1+x_3z_3+x_5z_5-x_3z_4-x_4z_3=0\\
\end{cases}
\end{equation}
which means that the vectors $u,w,x$ belong to the three-dimensional subspace orthogonal to $Span\{(0,1,0,0,0),(z_1,0,z_3-z_4,-z_3,z_5)\}$. By looking at expressions (\ref{z}) and (\ref{zz}), we see that vector $z$ belongs to the same subspace, so that $rk(u,w,x,z)<4$. The hypotheses of theorem \ref{n5} are therefore fulfilled and the proof is concluded.
\end{proof}
\section{The steepness property in the space of jets}\label{nekhoroshev}
We start by recalling Nekhoroshev's abstract construction (see ref. \cite{Nekhoroshev_1979}) of the sets $\sigma_n^r(I)$ in Theorem \ref{nek2}, with $r,n\geq 2$, whose closures contain all non steep functions with non-zero gradient at the point $I$. 
\newtheorem{sigmarn}[steepness]{Definition}
\begin{sigmarn}\label{bm}
Take two integers $r,n\geq 2$ and define $\beta_m:=\displaystyle\frac{\bar{\alpha}_m+3}{2}$, with $\bar{\alpha}_m$ as in Theorem \ref{nek2}.
\newline
 $\sigma_n^r(I) \subset\mathcal{P}_I(r,n)$  is the set containing the $r$-jets of smooth functions $h$ such that
\begin{enumerate}
\item $\nabla h(I)\neq 0$
\item There exists an $m$-dimensional subspace $\Lambda_m^I$ orthogonal to $\nabla h(I)$ and a curve $\gamma:\mathbb{R}\longrightarrow\Lambda_m^I$ of the form
\begin{equation}\label{curve}
\gamma(t):=\begin{cases}
x_1(t)=t\\
x_i(t)=\displaystyle\sum_{j=1}^{\beta_m-1}b_{ij}t^j\ ,\ \ i\in\{2,...,m\}\ ,\ \ b_{ij}\in\mathbb{R}
\end{cases}\ ,
\end{equation} 
such that the restriction of the gradient of $h$ to $\gamma(t)$ has a zero of order not smaller than $\beta_m-1$ at $t=0$:
\begin{equation}\label{zero}
\left.\frac{d^p (\nabla h|_{\Lambda_m^I})|_{\gamma(t)} }{dt^p}\right|_{t=0}=0\ ,\ \ p\in\{1,2,...,\beta_m-1\}\ .
\end{equation}  
\end{enumerate}
\end{sigmarn}

{\bfseries Remark.} The reader might wonder why the value $\beta_m=(\alpha_m+3)/2$ was chosen in the definition of $\sigma^r_n(I)$. Infact, in his first work on the genericity of steepness, Nekhoroshev proves that, for any fixed $\beta_m\in\mathbb{N},\,\beta_m> 1$, any polynomial $P\in\mathcal{P}_I(r,n)\backslash\sigma^r_n(I)$ is steep on the subspace $\Lambda_m^I$ with indices $\alpha_m=2(\beta_m-1)-1$, hence $\beta_m=(\alpha_m+3)/2$ (see Theorem C and Lemma 7.2.2 in ref. \cite{Nekhoroshev_1973}). 

With this definition, we can write down the algebraic conditions that the $r$-jet $P_I(h,r,n)$ of a smooth function $h$ must satisfy  in some $m$-dimensional subspace $\Lambda_m^I$ in order to belong to $\sigma_n^r(I)$.
For fixed $m$, these can be gathered in a system $\Xi_m(h,I,n)$ composed of four subsystems $\xi_{m,l}$, with $l=1,2,3,4$,
$$
\Xi_m(h,I,n):=
\begin{cases}
\xi_{m,1}(h)\ ;\ \
\xi_{m,2}(h,A^i)\\
\xi_{m,3}(h,A^i)\ ;\ \
\xi_{m,4}(h, A^i, b_{ij})
\end{cases}\ ,
$$
where $i\in\{1,...,m\},\ j\in\{1,...,\beta_m-1\}$, the $A^i$ are linearly independent vectors (with origin at $I$) which constitute a basis for $\Lambda_m^I$ and the coefficients $b_{ij}$ are real parameters defining a curve $\gamma(t)$ as in (\ref{curve}).
One has that
\begin{enumerate}
\item $\xi_1(h)$ imposes $\nabla h(I)\neq 0$;
\item $\xi_2(h,A^i)$ imposes the vectors $A^1,...,A^m$ to be linearly independent,
$$
rk[A^1,...,A^m]=m\ ;
$$  
\item $\xi_3(h,A^i)$ imposes the vectors $A^1,...,A^m$ to be orthogonal to $\nabla h(I)$,
$$
h_I^1[A^1]=0\ ;\ \
...\ ;\ \
h_I^1[A^m]=0\ ;
$$
\item $\xi_4(h,A^i,b_{ij})$ is a system of $m(\beta_m-1)$ equations obtained as follows. We denote with $x_1,...,x_m$ the coordinates for $\Lambda^I_m$ with respect to the basis $A^1,...,A^m$. By construction, such coordinates are null at $I$. Then, we consider the Taylor polynomial of $h|_{\Lambda_m^I}$ at $I$ up to order $\beta_m$, namely
\begin{align}
\begin{split}
P^{\beta_m}_n(x):=&\sum_{i=1}^m h^1_I[A^i]x_i+\frac{1}{2}\sum_{i,j=1}^m h^2_I[A^i,A^j]x_ix_j\\
+&...+\frac{1}{\beta_m!}\sum_{\underbrace{i,j,k,...,l=1}_{\beta_m\ \text{terms}}}^{m}h^{\beta_m-1}_I[A^i,A^j,A^k,...,A^l]x_ix_jx_k...x_l\ .
\end{split}
\end{align}
Condition (\ref{zero}) can now be imposed by considering the gradient $\nabla P^{\beta_m}_n(x)$, by injecting expression (\ref{curve}) in each of its $m$ components and by requiring that the $\beta_m-1$ coefficients of the resulting polynomial in $t$ are null. One thus obtains $m(\beta_m-1)$ equations.
\end{enumerate}
For fixed $m$, $\Xi_m(h,I,n)$ is said to be solvable for a given $h$ at $I$ if there exist a basis $A^1,...,A^m$ and real parameters $b_{ij}$ that verify it. $P_I(h,r,n)$ belongs to $\sigma_n^r(I)$ if at least one of the systems $\Xi_m(h,I,n)$, with $m\in\{1,...,n-1\}$, is solvable for $h$.
\newline
Indeed, following Theorem \ref{nek2}, in the sequel we will try to consider the closure of the algebraic conditions defining $\sigma_n^5(I)$ and, when this turns out to be too complicated, we will choose suitable closed sets containing $\sigma_n^5(I)$, with $n=2,3,4,5$. We will not deal with the case $n\geq 6$ since in such situation the conditions we find yield sets of steep functions which are smaller than those yielded by the three-jet non degeneracy condition, as it was already pointed out in ref. \cite{Schirinzi_Guzzo_2013}. 

\section{Proofs of Theorems \ref{n2}-\ref{n5}}\label{proofs}
For the sake of simplicity, from now on we drop the subscript $I$ in $h_I$ referring to the point where the considered jet is calculated. Moreover, we denote with $\Pi_{\Lambda^I_m}$ the projection onto an $m$-dimensional linear affine subspace $\Lambda_m^I$ orthogonal to the gradient. We start by stating the following simple lemma, which will turn out to be useful when trying to prove the closedness of the sets which we shall consider in the sequel, namely
\newtheorem{closedness}{Lemma}
\begin{closedness}\label{closedness}
	Let $E$ be a metric space, $K$ a compact subset of some metric space and $\Delta$ a closed subset of $E\times K$. Then, the projection of $\Delta$ on $E$, denoted with $\Pi_E(\Delta)$, is closed. 
\end{closedness}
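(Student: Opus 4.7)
The plan is to exploit the sequential characterization of closedness in metric spaces, since $E$ is metric and $\Delta$ lives inside $E\times K$ which is itself metrizable. So I would fix a sequence $(x_n)_{n\in\mathbb{N}}$ in $\Pi_E(\Delta)$ converging to some $x\in E$ and show that $x\in \Pi_E(\Delta)$.

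First, by the definition of the projection, for each $n$ there exists $k_n\in K$ with $(x_n,k_n)\in\Delta$. This gives a sequence $(k_n)$ in the compact set $K$. The key step is then to invoke sequential compactness: since $K$ is a compact subset of a metric space, one can extract a subsequence $(k_{n_j})_{j\in\mathbb{N}}$ converging to some $k\in K$. Along this subsequence one still has $x_{n_j}\to x$ in $E$, so $(x_{n_j},k_{n_j})\to (x,k)$ in the product metric on $E\times K$.

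At this point, the closedness of $\Delta$ in $E\times K$ forces the limit $(x,k)$ to belong to $\Delta$, and therefore $x=\Pi_E(x,k)\in \Pi_E(\Delta)$. This shows that $\Pi_E(\Delta)$ contains all its limit points, hence is closed.

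There is essentially no hard step here: compactness of $K$ is precisely what one needs to avoid the classical counterexamples (e.g. the projection of the hyperbola $xy=1$ onto the $x$-axis is not closed, because the second factor is not compact). If one preferred to avoid sequences, the same result follows by a standard tube-lemma argument: for $x\notin \Pi_E(\Delta)$, the fiber $\{x\}\times K$ is disjoint from the closed set $\Delta$, and one covers $K$ by open rectangles $U_k\times V_k$ disjoint from $\Delta$; a finite subcover yields a neighborhood $\bigcap_i U_{k_i}$ of $x$ disjoint from $\Pi_E(\Delta)$. The sequential proof is shorter in the metric setting and seems preferable here.
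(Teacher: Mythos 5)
Your proof is correct and follows essentially the same route as the paper's: lift the convergent sequence from $\Pi_E(\Delta)$ to $\Delta$, extract a convergent subsequence of the $K$-components by sequential compactness, and pass to the limit using the closedness of $\Delta$. The additional tube-lemma remark is a valid alternative but not needed.
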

\begin{proof}
	Let $\{p_n\}_{n\in\mathbb{N}}$ be a sequence in $\Pi_E(\Delta)$ converging to a point $\bar{p}$ and $\{k_n\}_{n\in\mathbb{N}}$ a sequence in $K$ satisying $(p_n,k_n)\in\Delta$. Since $K$ is a compact subset of some metric space, one can extract a subsequence $\{k_{n_l}\}_{l\in\mathbb{N}}$ converging to a point $\bar{k}\in K$. Hence, the sequence $\{(p_{n_l},k_{n_l})\}_{l\in\mathbb{N}}$ in $\Delta$ converges to $(\bar{p},\bar{k})\in\Delta$, since $\Delta$ is closed. This implies that $\bar{p}$ belongs to $\Pi_E(\Delta)$, which is therefore closed.
\end{proof}
The following sets will turn out to be particularly useful in the sequel.
\newtheorem{def2}[steepness]{Definition}
\newtheorem{def2bis}[steepness]{Definition}
\begin{def2}\label{Psi1star}
For $n=2,3,4$, we denote with $\Psi_1^*(n)\subset\mathcal P_I(5,n)$ the set of those jets of order five which satisfy the five-jet degeneracy condition. Similarly, for $n=5$ we denote with $\Psi_1^*(5)\subset\mathcal P_I(5,5)$ the set of those jets of order five which are four-jet degenerate. 
\newline
Moreover, for $n=1,2,3,4,5$, we indicate with $\Psi_1(n)\subset\mathcal P_I(5,n)$ the intersection between $\Psi_1^*(n)$ and the set containing those jets corresponding to functions having non-zero gradient at $I$. 
\end{def2} 
In particular, by Lemma 4.1 in ref. \cite{Schirinzi_Guzzo_2013} one has
\newtheorem{lem2}[closedness]{Lemma}
\begin{lem2}\label{lem2}
For $n=2,3,4,5$, the set $\Psi_1^*(n)$ is closed and it coincides with the closure of $\Psi_1(n)$.
\end{lem2}
With this setup, we are now ready to give the proofs of Theorems \ref{n2}-\ref{n5}.
\subsection{Proof of Theorem \ref{n2} (n=2)}
\begin{proof}
We assume the hypotheses of Theorem \ref{n2}. Since we are in a domain of $\mathbb{R}^2$, the only possible dimension for a subspace orthogonal to the gradient is $m=1$. For $n=2$ and $r=5$ we have $\beta_1=5$. Now, we build the set $\sigma_2^5(I)$ by following the strategy described by Nekhoroshev in \cite{Nekhoroshev_1979} and which we recalled in Theorem \ref{nek2} and Definition \ref{bm}.
First, we consider the Taylor polynomial of the restriction of the function $h$ to the subspace $\Lambda_1^I$ up to order $\beta_1=5$:
\begin{align}
\begin{split}
P^5_2(x)=&h^1[v]x+\frac{1}{2}h^2[v,v]x^2+\frac{1}{6}h^3[v,v,v]x^3\\
+&\frac{1}{24}h^4[v,v,v,v]x^4+\frac{1}{120}h^5[v,v,v,v,v]x^5\ ,
\end{split}
\end{align}
where $v$ is a non-null vector orthogonal to the gradient.
Then, we calculate $\nabla P_5^2(x)$ and we consider its restriction to the curve $x(t)=t$.
By setting all the coefficients of such polynomial to be equal to zero we obtain the subsystem $\xi_4(h,v)$ described in the previous section, so that system $\Xi_1(h,I)$ reads
\begin{equation}\label{5jetnd}
\begin{cases}
\nabla h(I)\neq 0\ ;\ \
v\neq 0\ ;\ \
h^1[v]=0\ ;\ \ 
h^2[v,v]=0\\
h^3[v,v,v]=0\ ;\ \
h^4[v,v,v,v]=0\ ;\ \
h^5[v,v,v,v,v]=0
\end{cases}\ .
\end{equation}
Since this is the only system we can consider in this case, we have that the set $\sigma_2^5(I)$ coincides with the one defined by  $\Xi_1(h,I)$ which, in turn, is equal to $\Psi_1(2)$ by Definition \ref{Psi1star}. Theorem \ref{n2} then follows from Lemma \ref{lem2} and Theorem \ref{nek2}.
\end{proof}
\subsection{Proof of Theorem \ref{n3} (n=3)}
Analogously to the case $n=2$, we give some suitable definitions.
\newtheorem{def3}[steepness]{Definition}
\newtheorem{def3bis}[steepness]{Definition}
\newtheorem{def3ter}[steepness]{Definition}
\newtheorem{def3quater}[steepness]{Definition}
\begin{def3bis}\label{def3bis}
We denote by $\Psi_2(3)$ the set in the space of $5$-jets of smooth functions $h$ of three variables such that there exist two linearly independent vectors $u,v$ and two real parameters $\alpha,\beta$ satisfying
\begin{equation}
\begin{cases}
\nabla h(I)\neq 0\ ;\ \ 
h^1[u]=
h^1[v]=
\Pi_{\Lambda_2^I}h^2[v,\cdot]=
\Pi_{\Lambda_2^I}\left(2\alpha h_I[u,\cdot]+h^3[v,v,\cdot]\right)=0\\
\Pi_{\Lambda_2^I} (  6\beta h^2[u,\cdot]+6\alpha h^3[u,v,\cdot]+  h_I^4[v,v,v,\cdot])=0\ .
\end{cases}
\end{equation}
\end{def3bis}
\begin{def3quater}
We denote by $\Psi_2^*(3)$ the set in the space of $5$-jets of smooth functions $h$ of three variables such that there exists two linearly independent vectors $u,v$ satisfying
\begin{equation}\label{sii}
\begin{cases}
h^1[u]=
h^1[v]=
h^2[v,v]=
h^2[v,u]=
h^3[v,v,v]=0\\
h^2[u,u]h^4[v,v,v,v]=3(h^3[v,v,u])^2\\
\end{cases}\ .
\end{equation}
\end{def3quater}
The following result holds true
\newtheorem{lem4}[closedness]{Lemma}
\begin{lem4}\label{lem4}
The set $\Psi_2^*(3)$ is closed and contains the closure of $\Psi_2(3)$.
\end{lem4}
\begin{proof}
We notice that all equations in (\ref{sii}) are homogeneous in $u$ and $v$, so that without any loss of generality we can consider $(u,v)\in \mathbb{S}^2\times \mathbb{S}^2$. Moreover, still without any loss of generality we can assume $u\cdot v=0$, since it is easy to see that the component of $u$ which is parallel to $v$ yields a null contribution to the system in \eqref{sii}. Then, system (\ref{sii}) defines an algebraic closed set in $\mathcal{P}_I(5,3)\times \mathbb{S}^2\times \mathbb{S}^2$, whose projection onto $\mathcal{P}_I(5,3)$ is $\Psi_2^*(3)$. Hence $\Psi_2^*(3)$ is closed by Lemma \ref{closedness}.
In order to prove inclusion, we write the system defining $\Psi_2(3)$ in its less compact form
\begin{equation}\label{siiiii}
\begin{cases}
\nabla h(I)\neq 0\ ;\ \
h^1[u]=0\ ;\ \
h^1[v]=0\ ;\ \
h^2[v,v]=0\\
h^2[v,u]=0\ ;\ \
h^3[v,v,v]=0\ ;\ \
6\alpha h^3[u,v,v]+  h^4[v,v,v,v]=0\\
2\alpha h^2[u,u]+h^3[u,v,v]=0\\
6\beta h^2[u,u]+6\alpha h^3[u,u,v]+  h^4[v,v,v,u]=0\\
\end{cases}\ .
\end{equation}
By applying Gauss elimination method to the last two equations
and by subtracting one to another, one can get rid of parameter $\alpha$ and obtains
$$
3(h^3[u,v,v])^2=h^4[v,v,v,v]h^2[u,u]\ .
$$
Then, by discarding the last equation and the inequality on the gradient of $h$, one reduces to the system defining the set $\Psi_2^*(3)$. Therefore the inclusion 
$
\Psi_2^*(3)\supset \Psi_2(3)
$
holds. 
Since $\Psi_2^*(3)$ is closed, one has $\Psi_2^*(3)\supset \bar{\Psi}_2(3)$ and the statement is thus proven.
\end{proof}
We remark that we considered set $\Psi_2^*(3)$ since $\Psi_2(3)$ is not closed, as we show in the following
\newtheorem{ex3}[ex1]{Example}
\begin{ex3}
	For $k\in\mathbb{N}$, consider the sequence of polynomial functions $$h_k(I_1,I_2,I_3)=\frac32 \frac{I_1^4+I_2^4}{4!}-\frac{I_3^4}{4!k}-\frac{I_2I_1^2}{2k}+\frac{I_2^2}{2k^2}+I_3\ ,$$
	converging to 
	$h(I_1,I_2,I_3)=\displaystyle \frac32 \frac{I_1^4+I_2^4}{4!}+I_3\ .$
	At the origin, the jet $P(h_k,5,3)$ associated to $h_k$ belongs to the set $\Psi_2(3)$ for all $k$, but the jet $P(h,5,3)$ associated to the limit function does not.
\end{ex3}
\begin{proof}
	For fixed $k\in\mathbb{N}$, set $\alpha_k:=\displaystyle\frac{k}{2}$, $\beta_k:=\displaystyle\frac{k^2}{2}$ and the vectors $u=(0,1,0)$, $v=(1,0,0)$. It is straightforward to see that $P(h_k,5,3)\in \Psi_2(3)$ at the origin, with such choice of vectors and parameters. However, the limit function $h$ is weakly-convex at the origin and, as the reader can easily verify, it does not fulfill system (\ref{siiiii}) for any non-null vector $v$. 
\end{proof}
We are now ready to write the proof of Theorem \ref{n3}.
\begin{proof}
We assume the hypotheses of Theorem \ref{n3}. Since we are in a domain of $\mathbb{R}^3$, $m$ can be equal to $1$ or $2$. For $n=3$ and $r=5$ we have $\beta_1=5$ and $\beta_2=4$. 
\newline
For $m=1$, by following the same construction as in the case $n=2$, we have the same expression of (\ref{5jetnd}) for $\Xi_1(h,I,3)$.
\newline
In order to build up system $\Xi_2(h,I,3)$, we follow the usual strategy described by Nekhoroshev in \cite{Nekhoroshev_1979} and we consider the Taylor polynomial of the restriction of the function $h$ to the subspace $\Lambda_2^I$ up to order $\beta_2=4$.
By calculating $\nabla P_4^3(x)=(\partial_{x_1}P_4^3(x),\partial_{x_2}P_4^3(x))$ along the curve 
$$
x_1=t\ ;\ \
x_2= b_{21} t+b_{22}t^2+b_{23}t^3
$$
and by setting equal to zero all the coefficients of the resulting polynomial in $t$ up to order $\beta_2-1=3$, one has that
\begin{enumerate}
\item The linear terms yield 
$
\Pi_{\Lambda_2^I}h^2[A^1+b_{21}A^2,\cdot]=0\ ;
$
\item The quadratic terms yield 
\begin{equation}
\Pi_{\Lambda_2^I}\left(2b_{22}h^2[A^2,\cdot]+h^3[A^1+b_{21}A^2,A^1+b_{21}A^2,\cdot]\right)=0\ ;
\end{equation}
\item Finally, the cubic terms yield
\begin{align}
\begin{split}
\Pi_{\Lambda_2^I} ( & 6b_{23} h^2[A^2,\cdot]+6b_{22}h^3[A^2,A^1+b_{21}A^2,\cdot]\\+ & h^4[A^1+b_{21}A^2,A^1+b_{21}A^2,A^1+b_{21}A^2,\cdot])=0\ ;
\end{split}
\end{align}
\end{enumerate}
where $A^1,A^2$ are a basis for $\Lambda_2^I$.
\newline
Thus, system $\Xi_2(h,I,3)$ takes the form
\begin{equation}
\begin{cases}
\nabla h(I)\neq 0\ ;\ \
h^1[u]=
h^1[v]=
\Pi_{\Lambda_2^I}h^2[v,\cdot]=
\Pi_{\Lambda_2^I}\left(2\alpha h^2[u,\cdot]+h^3[v,v,\cdot]\right)=0\\
\Pi_{\Lambda_2^I} (  6\beta h^2[u,\cdot]+6\alpha h^3[u,v,\cdot]+  h^4[v,v,v,\cdot])=0\ ,
\end{cases}
\end{equation}
with $u:=A^2,v:=A^1+b_{21}A^2$ two linearly independent vectors and $\alpha:=b_{22},\beta:=b_{23}$ two real parameters.
With the help of Definitions \ref{Psi1star} and \ref{def3bis} we see that $\sigma_3^5(I)=\Psi_1(3)\cup\Psi_2(3)$. As a consequence of Lemmas \ref{lem2} and \ref{lem4} and of Theorem \ref{nek2} one has 
$
\Sigma_3^5(I)=\bar{\sigma}_3^5(I)\subset\Psi^*_1(3)\cup\Psi_2^*(3)
$
and Theorem \ref{n3} follows.
\end{proof}
\subsection{Proof of Theorem \ref{n4} (n=4)}
We start with the usual definitions
\newtheorem{def4ter}[steepness]{Definition}
\newtheorem{def4quater}[steepness]{Definition}
\newtheorem{def4quinquies}[steepness]{Definition}
\newtheorem{def4sesquies}[steepness]{Definition}
\begin{def4ter}\label{def4ter}
We denote by $\Psi_2(4)$ the set in the space of $5$-jets of smooth functions $h$ of four variables such that there exist two linearly independent vectors $u,v$ and three real parameters $\alpha,\beta,\gamma$ satisfying
\begin{equation}\label{sistemaalfann}
\begin{cases}
\nabla h(I)\neq 0\ ;\ \
h^1[u]=
h^1[v]=
\Pi_{\Lambda_2^I}h^2[v,\cdot]=
\Pi_{\Lambda_2^I}\left(2\alpha h^2[u,\cdot]+h^3[v,v,\cdot]\right)=0\\
\Pi_{\Lambda_2^I} (  6\beta h^2[u,\cdot]+6\alpha h^3[u,v,\cdot]+  h^4[v,v,v,\cdot])=0\\
\Pi_{\Lambda_2^I}(24\gamma h^2[u,\cdot]+24\beta h^3[u,v,\cdot]\\
+12\alpha^2h^3[u,u,\cdot]+12\alpha h^4[v,v,u,\cdot]+h^5[v,v,v,v,\cdot])=0
\end{cases}\ .
\end{equation}
\end{def4ter}
\begin{def4quater}\label{def4quater}
We denote by $\Psi_2^*(4)$ the set in the space of $5$-jets of smooth functions $h$ of four variables such that there exist two linearly independent vectors $u,v$ satisfying
\begin{equation}\label{sistemino}
\begin{cases}
h^1[u]=
h^1[v]=
h^2[v,v]=
h^2[v,u]=
h^3[v,v,v]=0\\
h^2[u,u]h^4[v,v,v,v]=3(h^3[v,v,u])^2\\
15(h^3[v,v,u])^2h^3[u,u,v]+h^5[v,v,v,v,v](h^2[u,u])^2\\
\ \ =10h^4[v,v,v,u]h^3[u,v,v]h^2[u,u]
\end{cases}\ .
\end{equation}
\end{def4quater}
\begin{def4quinquies}\label{def4quinquies}
We denote by $\Psi_3(4)$ the set in the space of $5$-jets of smooth functions $h$ of four variables such that there exist three linearly independent vectors $u,v,w$ and two real parameters $\alpha,\beta$ satisfying
\begin{equation}
\begin{cases}
\nabla h(I)\neq 0\ ;\ \
h^1[u]=
h^1[v]=
h^1[w]=
\Pi_{\Lambda_3^I}h^2[v,\cdot]=0\\
\Pi_{\Lambda_3^I}(2h^2[\alpha u+\beta w,\cdot]+h^3[v,v,\cdot])=0\\
\end{cases}\ .
\end{equation}
\end{def4quinquies}
\begin{def4sesquies}\label{def4sesquies}
We denote by $\Psi_3^*(4)$ the set in the space of $5$-jets of smooth functions $h$ of four variables such that there exist three linearly independent vectors $u,v,w$ satisfying
\begin{equation}\label{prova1}
h^1[u]=
h^1[v]=
h^1[w]=
h^2[v,v]=h^2[v,u]=h^2[v,w]=0\ ;\ \
h^3[v,v,v]=0
\ .
\end{equation}
\end{def4sesquies}
With these definitions, we have the following
\newtheorem{lem6}[closedness]{Lemma}
\begin{lem6}\label{lem6}
The sets $\Psi_2^*(4),\Psi_3^*(4)$ are closed and one also has the inclusions
$
\Psi_2^*(4)\supseteq\bar{\Psi}_2(4),
\Psi_3^*(4)\supseteq\bar{\Psi}_3(4)\ .
$
\end{lem6}
\begin{proof}
The proof is similar to that of Theorem \ref{lem4}: without any loss of generality, one can always choose the vectors to be perpendicular and unitary, so that systems (\ref{sistemino}) and (\ref{prova1}) define algebraic closed sets in $\mathcal{P}_I(5,n)\times\mathbb{S}^2\times\mathbb{S}^2$ and $\mathcal{P}_I(5,n)\times\mathbb{S}^3\times\mathbb{S}^3$, whose projections onto $\mathcal{P}_I(5,n)$ are $\Psi_2^*(4)$ and $\Psi_3^*(4)$, which are therefore closed thanks to Lemma \ref{closedness}.
As for the inclusions, the relation
$
\Psi_3(4)\subset\Psi_3^*(4)
$
is immediate from definition \ref{def4quinquies}, once one projects the equations on the basis $u,v,w$ and compares the system to the one in definition \ref{def4sesquies}.
\newline
In order to prove that
$
\Psi_2(4)\subset\Psi_2^*(4)\ ,
$
we consider system (\ref{sistemaalfann}) defining $\Psi_2(4)$ in its most explicit form
\begin{equation}\label{sit}
\begin{cases}
\nabla h(I)\neq 0\ ;\ \
h^1[u]=h^1[v]=0\ ;\ \
h^2[v,v]=h^2[v,u]=0\\
h^3[v,v,v]=0\ ;\ \
2\alpha h^2[u,u]+h^3[v,v,u]=0\\
6\alpha h^3[u,v,v]+  h^4[v,v,v,v]=0\\
6\beta h^2[u,u]+6\alpha h^3[u,u,v]+  h^4[v,v,v,u]=0\\
24\beta h^3[u,v,v]
+12\alpha^2h^3[u,u,v]+12\alpha h^4[v,v,v,u]+h^5[v,v,v,v,v]=0\\
24\gamma h^2[u,u]+24\beta h^3[u,u,v]\\
+12\alpha^2h^3[u,u,u]+12\alpha h^4[v,v,u,u]+h^5[v,v,v,v,u]=0
\end{cases}\ .
\end{equation}
Applying Gauss elimination method in order to get rid of parameters $\alpha, \beta$ and discarding the last equation containing parameter $\gamma$ yields system (\ref{sistemino}) defining $\Psi_2^*(4)$.
Therefore, one has 
$
\Psi_2(4)\subset\Psi_2^*(4)\ .
$
Since $\Psi_2^*(4)$ and $\Psi_3^*(4)$ are closed, one finally obtains
$
\Psi_2^*(4)\supseteq\bar{\Psi}_2(4)\ ,
\Psi_3^*(4)\supseteq\bar{\Psi}_3(4)\ .
$
\end{proof}
With this setup, we are ready to prove Theorem \ref{n4}.
\begin{proof}
Since we work in a domain of $\mathbb{R}^4$, $m$ can be equal to $1,2$ or $3$. For $n=4$ and $r=5$ we have $\beta_1=\beta_2=5$ and $\beta_3=3$. 
\newline
For $m=1$, we follow the same construction as in the cases $n=2,3$ and system $\Xi_1(h,I,4)$ defines  a set $\Psi_1(4)$ whose closure concides with $\Psi_1^*(4)$. 
In order to build up system $\Xi_2(h,I,4)$, we follow once again the construction in \cite{Nekhoroshev_1979} and we consider the Taylor polynomial of the restriction of the function $h$ to a subspace $\Lambda_2^I$ up to order $\beta_2=5$.
By calculating $\nabla P_5^4(x)=(\partial_{x_1}P_5^4(x),\partial_{x_2}P_5^4(x))$ along the curve 
$$
x_1=t\ ;\ \
x_2= b_{21} t+b_{22}t^2+b_{23}t^3+b_{24}t^4
$$
and by setting equal to zero all the coefficients of the resulting polynomial in $t$ up to order $\beta_2-1=4$, one has that
\begin{enumerate}
\item The linear terms yield 
$
\Pi_{\Lambda_2^I}h^2[A^1+b_{21}A^2,\cdot]=0\ ;
$
\item The quadratic terms yield 
\begin{equation}
\Pi_{\Lambda_2^I}\left(2b_{22}h^2[A^2,\cdot]+h^3[A^1+b_{21}A^2,A^1+b_{21}A^2,\cdot]\right)=0\ ;
\end{equation}
\item The cubic terms yield
\begin{align}
\begin{split}
\Pi_{\Lambda_2^I} ( & 6b_{23} h^2[A^2,\cdot]+6b_{22}h^3[A^2,A^1+b_{21}A^2,\cdot]\\+ & h^4[A^1+b_{21}A^2,A^1+b_{21}A^2,A^1+b_{21}A^2,\cdot])=0\ ;
\end{split}
\end{align}
\item The quartic terms yield
\begin{align}
\begin{split}
\Pi_{\Lambda_2}(& 24b_{24} h^2[A^2,\cdot] + 
 24 b_{23} h^3[A^1+b_{21}A^2,A^2,\cdot] + 12 b_{22}^2 h^3[A^2,A^2,\cdot]\\
 + & 12 b_{22}h^4[A^1+b_{21}A^2,A^1+b_{21}A^2,A^2,\cdot]\\
  + & h^5[A^1+b_{21}A^2,A^1+b_{21}A^2,A^1+b_{21}A^2,A^1+b_{21}A^2,\cdot])=0\ ;\\
\end{split}
\end{align}
\end{enumerate}
where $A^1,A^2$ are a basis for $\Lambda_2^I$.
Thus, by following the same strategy as in the previous section, $\Xi_2(h,I)$ has the form
\begin{equation}
\begin{cases}
 \nabla h(I)\neq 0\ ;\ \
 h^1[u]=
 h^1[v]=0\ ;\ \
 \Pi_{\Lambda_2^I} h^2[v,\cdot]=0\\
\Pi_{\Lambda_2^I} \left( 2\alpha h^2[u,\cdot]+h^3[v,v,\cdot]\right)=0\\
 \Pi_{\Lambda_2^I} ( 6\beta h^2[u,\cdot]+6\alpha h^3[u,v,\cdot]+  h^4[v,v,v,\cdot])=0\\
 \Pi_{\Lambda_2}(24\gamma h^2[u,\cdot] + 
 24 \beta h^3[v,u,\cdot] + 12 \alpha^2 h^3[u,u,\cdot]\\
 + 12 \alpha h^4[v,v,u,\cdot]
+  h^5[v,v,v,v,\cdot])=0\ .\\
\end{cases}
\end{equation}
with $u:=A^2,v:=A^1+b_{21}A^2$ two linearly independent vectors and $\alpha:=b_{22},\beta:=b_{23},\gamma:=b_{24}$ three real parameters.
Finally, we construct system $\Xi_3(h,I)$.
We consider the Taylor polynomial $P_5^4(x)$ of the restriction of the function $h$ to the subspace $\Lambda_3^I$, up to order $\beta_3=3$.
By calculating $\nabla P_5^4(x)=(\partial_{x_1}P_5^4(x),\partial_{x_2}P_5^4(x),\partial_{x_3}P_5^4(x))$ along the curve 
$$
x_1=t\ ;\ \
x_2= b_{21} t+b_{22}t^2\ ;\ \
x_3= b_{31} t+b_{32}t^2
$$
and by setting equal to zero all the coefficients of the resulting polynomial in $t$ up to order $\beta_3-1=2$, one has that
\begin{enumerate}
\item The linear terms yield 
$
\Pi_{\Lambda_3^I}(h^2[A^1+b_{21}A^2+b_{31}A^3,\cdot])=0\ ;
$
\item The quadratic terms yield 
\begin{align}
\begin{split}
\Pi_{\Lambda_3^I}( & h^2[2b_{22}A^2+2b_{32}A^3,\cdot]\\
+&h^3[A^1+b_{21}A^2+b_{31}A^3,A^1+b_{21}A^2+b_{31}A^3,\cdot])=0\ ;
\end{split}
\end{align}
where $A_1,A_2,A_3$ are a basis for $\Lambda_3^I$.
\end{enumerate}
Thus, by following the same strategy as in the previous section, $\Xi_3(h,I)$ reads
\begin{equation}
\begin{cases}
\nabla h(I)\neq 0\ ;\ \
 h^1[u]=0\ ;\ \
 h^1[v]=0\ ;\ \
 h^1[w]=0\\
 \Pi_{\Lambda_3^I} h^2[v,\cdot]=0\ ;\ \
\Pi_{\Lambda_3^I} \left( 2 h^2[\alpha u+\beta w,\cdot]+h^3[v,v,\cdot]\right)=0\\
\end{cases}\ .
\end{equation}
with $u:=A^2,v:=A^1+b_{21}A^2+b_{31}A^3$, $w:=A^3$ three linearly independent vectors and $\alpha:=b_{22},\beta:=b_{32}$ two real parameters.
With the help of Definitions \ref{Psi1star}, \ref{def4ter} and \ref{def4quinquies}, we see that
$
\sigma_4^5(I)=\Psi_1(4)\cup\Psi_2(4)\cup\Psi_3(4)\ ,
$
so that, as a consequence of Lemma \ref{lem6} and of Theorem \ref{nek2}, one has 
$
\Sigma_4^5(I)=\bar{\sigma}_4^5(I)\subset\Psi^*_1(4)\cup\Psi_2^*(4)\cup\Psi_3^*(4),
$ which, together with Theorem \ref{nek2} once again, implies Theorem \ref{n4}.
\end{proof}
\subsection{Proof of Theorem \ref{n5} (n=5)}
We start with the usual definitions
\newtheorem{def5ter}[steepness]{Definition}
\newtheorem{def5quater}[steepness]{Definition}
\newtheorem{def5quinquies}[steepness]{Definition}
\newtheorem{def5sesquies}[steepness]{Definition}
\newtheorem{def5septies}[steepness]{Definition}
\newtheorem{def5octies}[steepness]{Definition}
\begin{def5ter}\label{def5ter}
We denote by $\Psi_2(5)$ the set in the space of $5$-jets of smooth functions $h$ of five variables such that there exist two linearly independent vectors $u,v$ and three real parameters $\alpha,\beta,\gamma$ satisfying
\begin{equation}
\begin{cases}
\nabla h(I)\neq 0\ ;\ \
h^1[u]=
h^1[v]=
\Pi_{\Lambda_2^I}h^2[v,\cdot]=
\Pi_{\Lambda_2^I}\left(2\alpha h^2[u,\cdot]+h^3[v,v,\cdot]\right)=0\\
\Pi_{\Lambda_2^I} (  6\beta h^2[u,\cdot]+6\alpha h^3[u,v,\cdot]+  h^4[v,v,v,\cdot])=0\\
\Pi_{\Lambda_2^I}(24\gamma h^2[u,\cdot]+24\beta h^3[u,v,\cdot]\\
+12\alpha^2h^3[u,u,\cdot]+12\alpha h^4[v,v,u,\cdot]+h^5[v,v,v,v,\cdot])=0
\end{cases}\ .
\end{equation}
\end{def5ter}
\begin{def5quater}\label{def5quater}
We denote by $\Psi_2^*(5)$ the set in the space of $5$-jets of smooth functions $h$ of five variables such that there exist two linearly independent vectors $u,v$ satisfying
\begin{equation}
\begin{cases}
h^1[u]=
h^1[v]=
h^2[v,v]=
h^2[v,u]=
h^3[v,v,v]=0\\
h^2[u,u]h^4[v,v,v,v]=3(h^3[v,v,u])^2\\
15(h^3[v,v,u])^2h^3[u,u,v]+h^5[v,v,v,v,v](h^2[u,u])^2\\
\ \ =10h^4[v,v,v,u]h^3[u,v,v]h^2[u,u]
\end{cases}\ .
\end{equation}
\end{def5quater}
\begin{def5quinquies}\label{def5quinquies}
We denote by $\Psi_3(5)$ the set in the space of $5$-jets of smooth functions $h$ of five variables such that there exist three linearly independent vectors $u,v,w$ and four real parameters $\alpha,\beta,\gamma,\delta$ satisfying
\begin{equation}
\begin{cases}
\nabla h(I)\neq 0\ ;\ \
h^1[u]=
h^1[v]=
h^1[w]=
\Pi_{\Lambda_3^I}h^2[v,\cdot]=0\\
\Pi_{\Lambda_3^I}(h^2[\alpha u+\beta w,\cdot]+h^3[v,v,\cdot])=0\\
\Pi_{\Lambda_3^I}(6h^2[\gamma u +\delta w,\cdot]+6h^3[\alpha u +\beta w,v,\cdot]+h^4[v,v,v,\cdot])=0
\end{cases}\ .
\end{equation}
\end{def5quinquies}
\begin{def5sesquies}\label{def5sesquies}
We denote by $\Psi_3^*(5)$ the set in the space of $5$-jets of smooth functions $h$ of five variables such that there exist three linearly independent vectors $u,v,w$ satisfying
\begin{equation}
\begin{cases}
h^1[u]=h^1[v]=h^1[w]=
h^2[v,v]=h^2[v,u]=h^2[v,w]=
h^3[v,v,v]=0\\
12h^3[u,v,v]h^3[v,v,w]h^2[u,u]h^2[u,w]\\
\ \ -6(h^3[u,v,v])^2(h^2[u,w])^2-6(h^3[v,v,w])^2(h^2[u,u])^2\\
\ \ +\{h^4[v,v,v,v]h^2[u,u]-6(h^3[u,v,v])^2\}\{h^2[w,w]h^2[u,u]-(h^2[u,w])^2\}=0
\end{cases}\ .
\end{equation}
\end{def5sesquies}
\begin{def5septies}\label{def5septies}
We denote by $\Psi_4(5)$ the set in the space of $5$-jets of smooth functions $h$ of five variables such that there exist four linearly independent vectors $u,v,w,x$ satisfying
\begin{equation}
\nabla h(I)\neq 0\ ;\ \
h^1[u]=h^1[v]=h^1[w]=h^1[x]=0\ ;\ \
\Pi_{\Lambda_4^I}h^2[v,\cdot]=0\ .
\end{equation}
\end{def5septies}
\begin{def5septies}\label{def5octies}
We denote by $\Psi_4^*(5)$ the set in the space of $5$-jets of smooth functions $h$ of five variables such that there exist four linearly independent vectors $u,v,w,x$ satisfying
\begin{equation}
h^1[u]=h^1[v]=h^1[w]=h^1[x]=0\ ;\ \
\Pi_{\Lambda_4^I}h^2[v,\cdot]=0\ .
\end{equation}
\end{def5septies}
We have the following result:
\newtheorem{lem8}[closedness]{Lemma}
\begin{lem8}\label{lem8}
The sets $\Psi_2^*(5),\Psi_3^*(5),\Psi_4^*(5)$ are closed and the following inclusions hold:
$
\Psi_2^*(5)\supset\bar{\Psi}_2(5),
\Psi_3^*(5)\supset\bar{\Psi}_3(5),
\Psi_4^*(5)\supset\bar{\Psi}_4(5).
$
\end{lem8}
\begin{proof}
Closure of the three sets $\Psi_2^*(5),\Psi_3^*(5),\Psi_4^*(5)$ is proven exactly in the same way as in the previous paragraphs, with the help of Lemma \ref{closedness}.
The proof of the inclusion $\Psi_2^*(5)\supset\bar{\Psi}_2(5)$ is identic to the one given in Lemma \ref{lem6} for the inclusion $\Psi_2^*(4)\supset\bar{\Psi}_2(4)$.
Inclusion $\Psi_4^*(5)\supset\bar{\Psi}_4(5)$ is immediate when considering the definitions of $\Psi_4(5)$ and $\Psi_4^*(5)$ and the closure of the latter.
The only non-trivial inclusion is thus $\Psi_3^*(5)\supset\bar{\Psi}_3(5)$. In order to prove it, we rewrite the system defining $\Psi_3(5)$ in its less synthetic form
\begin{equation}\label{sist5}
\begin{cases}
\nabla h(I)\neq 0\ ;\ \
h^1[u]=
h^1[v]=
h^1[w]=
h^2[v,v]=h^2[v,u]=h^2[v,w]=0\\
h^3[v,v,v]=0\ ;\ \
h^2[\alpha u+\beta w,u]+h^3[v,v,u]=0\\
h^2[\alpha u+\beta w,w]+h^3[v,v,w]=0\\
6h^2[\gamma u +\delta w,u]+6h^3[\alpha u +\beta w,v,u]+h^4[v,v,v,u]=0\\
6h^2[\gamma u +\delta w,w]+6h^3[\alpha u +\beta w,v,w]+h^4[v,v,v,w]=0\\
6h^3[\alpha u +\beta w,v,v]+h^4[v,v,v,v]=0\\
\end{cases}\\ \ .
\end{equation}
Once again, Gauss elimination method can be used in order to get rid of parameters $\alpha$ and $\beta$. Then, by discarding the first inequality and the two equations containing $\gamma,\delta$ in system (\ref{sist5}) defining $\Psi_3(5)$, one obtains the system in Definition \ref{def4sesquies}, which determines $\Psi_3^*(5)$. Therefore, one has $\Psi_3^*(5)\supset\Psi_3(5)$ and $\Psi_3^*(5)\supset\bar{\Psi}_3(5)$ since $\Psi^*_3(5)$ is closed.
\end{proof}
With this background, we are ready to prove Theorem \ref{n5}.
\begin{proof}
Since we work in a domain of $\mathbb{R}^5$, $m$ can be equal to $1,2,3$ or $4$. For $n=5$ and $r=5$ we have $\beta_1=4, \beta_2=5,\beta_3=4$ and $\beta_4=2$. 
\newline
For $m=1$, by following the same construction as in the cases $n=2,3,4$, we find the following expression for $\Xi_1(h,I,5)$:
\begin{equation}
\nabla h(I)\neq 0\ ;\ \
v\neq 0\ ;\ \
h^1[v]=
h^2[v,v]=
h^3[v,v,v]=
h^4[v,v,v,v]=0
\ ,
\end{equation}
so that $\Xi_1(h,I,5)=\Psi_1(5)$ by Definition \ref{Psi1star}.\newline
Now, since $\beta_2=5$ as it was in the case $n=4$, we have exactly the same construction and we can write $\Xi_2(h,I,5)$ in the same form:
\begin{equation}
\begin{cases}
\nabla h(I)\neq 0\ ;\ \
 h^1[u]=
 h^1[v]=
 \Pi_{\Lambda_2^I} h^2[v,\cdot]=
\Pi_{\Lambda_2^I} \left( 2\alpha h^2[u,\cdot]+h^3[v,v,\cdot]\right)=0\\
 \Pi_{\Lambda_2^I} ( 6\beta h^2[u,\cdot]+6\alpha h^3[u,v,\cdot]+  h^4[v,v,v,\cdot])=0\\
 \Pi_{\Lambda_2}(24\gamma h^2[u,\cdot] + 
 24 \beta h^3[v,u,\cdot] + 12 \alpha^2 h^3[u,u,\cdot]\\
 + 12 \alpha h^4[v,v,u,\cdot]
+  h^5[v,v,v,v,\cdot])=0\ .\\
\end{cases}
\end{equation}
with $u:=A^2,v:=A^1+b_{21}A^2$ two linearly independent vectors and $\alpha:=b_{22},\beta:=b_{23},\gamma:=b_{24}$ three real parameters.
We now construct $\Xi_3(h,I,5)$.
As usual, we consider the Taylor polynomial $P_5^4(x_1,x_2,x_3)$ of the restriction of the function $h$ to the subspace $\Lambda_3^I$ up to order $\beta_3=4$.
By calculating $\nabla P_5^4(x)=(\partial_{x_1}P_5^4(x),\partial_{x_2}P_5^4(x),\partial_{x_3}P_5^4(x))$ along the curve 
$$
x_1=t\ ;\ \
x_2= b_{21} t+b_{22}t^2+b_{23}t^3\ ;\ \
x_3= b_{31} t+b_{32}t^2+b_{24}t^4
$$
and by setting equal to zero all the coefficients of the resulting polynomial in $t$ up to order $\beta_3-1=3$, one has that
\begin{enumerate}
\item The linear terms yield
$
\Pi_{\Lambda_3^I}(h^2[A^1+b_{21}A^2+b_{31}A^3,\cdot])=0\ ;
$
\item The quadratic terms yield 
\begin{align}
\begin{split}
\Pi_{\Lambda_2^I} \left(\right. &h^2[b_{22}A^2+b_{32}A^3,\cdot] \\
 &+h^3[A^1+b_{21}A^2+b_{31}A^3,A^1+b_{21}A^2+b_{31}A^3,\cdot]\left.\right)=0\ ;
 \end{split}
\end{align} 
\item The cubic terms yield 
\begin{align}
\begin{split}
\Pi_{\Lambda_3^I}( & 6h^2[2b_{23}A^2+2b_{33}A^3,\cdot]
+6 h^3[b_{22}A^2+b_{23}A^3,A^1+b_{21}A^2+b_{31}A^3,\cdot]\\
+ &h^4[A^1+b_{21}A^2+b_{31}A^3,A^1+b_{21}A^2+b_{31}A^3,A^1+b_{21}A^2+b_{31}A^3,\cdot])\\
&=0\ ;
\end{split}
\end{align}
where $A_1,A_2,A_3$ are a basis for $\Lambda_3^I$.
\end{enumerate}
Therefore, $\Xi_3(h,I,5)$ can be compactly formulated as
\begin{equation}
\begin{cases}
\nabla h(I)\neq 0\ ;\ \
h^1[u]=
h^1[v]=
h^1[w]=0\ ;\ \
\Pi_{\Lambda_3^I}h^2[v,\cdot]=0\\
\Pi_{\Lambda_3^I}(h^2[\alpha u+\beta w,\cdot]+h^3[v,v,\cdot])=0\\
\Pi_{\Lambda_3^I}(6h^2[\gamma u +\delta w]+6h^3[\alpha u +\beta w,v,\cdot]+h^4[v,v,v,\cdot])=0
\end{cases}\ .
\end{equation}
with $u:=A^2,v:=A^1+b_{21}A^2+b_{31}A^3$, $w:=A^3$ three linearly independent vectors and $\alpha:=b_{22},\beta:=b_{32}, \gamma:=b_{23},\delta:=b_{33}$ four real parameters.
\newline
Finally, we construct $\Xi_4(h,I,5)$ in the same usual way.
\newline
As usual, we consider the Taylor polynomial $P_5^4(x_1,x_2,x_3)$ of the restriction of the function $h$ to the subspace $\Lambda_4^I$ up to order $\beta_4=2$.
\newline
By calculating $\nabla P_5^4(x)=(\partial_{x_1}P_5^4(x),\partial_{x_2}P_5^4(x),\partial_{x_3}P_5^4(x))$ along the curve 
$$
x_1(t)=t\ ;\ \
x_2(t)= b_{21} t\ ;\ \
x_3(t)= b_{31} t\ ;\ \
x_4(t)=b_{41} t
$$
and by setting equal to zero all the coefficients of the resulting polynomial in $t$ up to order $\beta_4-1=1$, $\Xi_4(h,I,5)$ reads
\begin{equation}
\nabla h(I)\neq 0\ ;\ \
h^1[u]=h^1[v]=h^1[w]=h^1[x]=0\ ;\ \
\Pi_{\Lambda_4}h^2[v,\cdot]=0
\ ,
\end{equation}
with $v=A^1+b_{21}A^2+b_{31}A^3+b_{41}A^4,u=A^2,w=A^3,x=A^4$.
With the help of definitions \ref{Psi1star}, \ref{def5quater} and \ref{def5sesquies}, we see that 
$
\sigma_5^5(I)=\Psi_1(5)\cup\Psi_2(5)\cup\Psi_3(5)\cup\Psi_4(5)
$
so that, as a consequence of Lemma \ref{lem8} and of Theorem \ref{nek2}, one has 
$
\Sigma_5^5(I)=\bar{\sigma}_5^5(I)\subset\Psi^*_1(5)\cup\Psi_2^*(5)\cup\Psi_3^*(5)\cup\Psi_4^*(5)
.$
This, together with Theorem \ref{nek2}, implies Theorem \ref{n5}.
\end{proof}
\section{Final remarks}\label{discussion}
\subsection{The case $n\geq 6$}\label{ngeq6}
As the computations in the previous sections showed (see e.g. the case $n=2$ or ref. \cite{Schirinzi_Guzzo_2013}), Nekhoroshev's construction on affine linear subspaces of dimension $m=1$ always yields a subsystem $\Xi_1(h,I,n)$ requiring $\beta_1$-degeneracy condition. In other words, take an arbitrary integer $r\geq 2$ and compute coefficient $\beta_1$ on a one-dimensional subspace; if there exists $v\neq 0$ such that
\begin{equation}
\nabla h(I)\neq 0\ ;\ \
h^1[v]=0\ ;\ \
h^2[v,v]=0\ ;\ \
...\ ;\ \
h^{\beta_1}[v,...,v]=0
\end{equation}
is satisfied, then the $r$-jet of $h$ belongs to $\sigma_n^r(I)$, since it fulfills membership requirements on subspaces of dimension $m=1$. On the other hand, algebraic conditions for steepness on jets of order strictly greater than three make sense only at those points $I$ where the function $h$ is three-jet degenerate, since three-jet non-degeneracy automatically implies steepness. By looking at the explicit expression for $\beta_m$ in Definition \ref{bm} and by taking expression (\ref{boundam}) for the maximal index of steepness $\bar{\alpha}_m$ into account, one easily sees that $\beta_1\leq 3$ for $r=5$, $m=1$ and $n\geq 6$. Therefore, the $5$-jet of a function $h$ with six or more degrees of freedom belongs to $\sigma_n^5(I)$ at those points $I$ where $h$ is three-jet degenerate. As a consequence, in this case Theorem \ref{nek2} is helpless at establishing whether $h$ is steep or not at those points where it is three-jet degenerate.

\subsection{Genericity and further developments}
As Nekhoroshev pointed out in refs. \cite{Nekhoroshev_1973}, \cite{Nekhoroshev_1979} and as Theorem \ref{nek2} shows, steepness is a generic property in the space of jets of a sufficiently high order $r$, since the codimension of the set containing the jets of all non-steep functions becomes positive for $r$ sufficiently big. Such property is due to the fact that, for increasing $r$, one obtains more and more algebraic conditions that a function must satisfy in order to belong to such set. As Nekhoroshev writes in ref. \cite{Nekhoroshev_1973}: "Hamiltonians that fail to be steep at a non-critical point are infinitely singular: they satisfy an infinite number of conditions on their Taylor coefficients". This, in turn, is a straightforward consequence of Definition \ref{bm}: when $r$ increases, so does the order of the zero that the gradient of the tested function must possess on the minimal path $\gamma$ so to stay in the bad set $\sigma_n^r(I)$. Indeed, since $\gamma$ is a polynomial path, this implies that more and more coefficients of such polynomial must be set equal to zero, which yields an increasing number of algebraic conditions on the coefficients of the jet of the studied function. 
\newline
In the present section, we give some examples of genericity for the sufficient conditions for steepness which we examined throughout the article. 
\newtheorem{ex4}[ex1]{Example}
\begin{ex4}
	Quasi-convexity is a generic property in the space $\mathscr{P}(r,2)$ of polynomials of fixed degree $r\geq 2$ of two variables.  
\end{ex4}
\begin{proof}
In case $h$ is a non quasi-convex polynomial of order two in two variables, there exists $v\neq 0$ such that system 
\begin{equation}\label{2jet}
h^1[v]=0\ ;\ \ h^2[v,v]=0
\end{equation}
 is satisfied. Moreover, $v$ can be normalized to one since the system is homogeneous in such variable. Therefore, for all non quasi-convex functions $h$ and for any integer $r\geq 2$, system (\ref{2jet}) defines an algebraic set of codimension two in the cartesian space $\mathscr{P}(r,2)\times\mathbb{S}^1$ of polynomials and vectors. Since $ \mathbb{S}^1$ has dimension one, by the Theorem of Tarski and Seidenberg, the projection of such algebraic set in the space of polynomials $\mathscr{P}(r,2)$ is semialgebraic and its codimension is no less than $2-1=1$. 
\end{proof}
By following exactly the same startegy, one can prove also the two following 
\newtheorem{ex5}[ex1]{Example}
\begin{ex5}
	Three-jet non-degeneracy is a generic property in the space $\mathscr{P}(r,3)$ of polynomials of fixed degree $r\geq 3$ of three variables.  
\end{ex5}
\newtheorem{ex6}[ex1]{Example}
\begin{ex6}\label{example6}
The sufficient conditions for steepness of Theorem \ref{n4} are generic in the space $\mathscr{P}(r,4)$ of polynomials of fixed degree $r\geq 4$ of four variables.  
\end{ex6}
We remark that the minimal degree $r$ of the polynomials for which genericity holds in the previous examples is the same one yielded by formula (\ref{codimension}) in Theorem \ref{nek2}. Therefore, generic conditions for steepness for polynomials of arbitrary degree can only be inferred if one is able to write sufficient conditions for jets of any order. Such task is not straightforward and will be investigated in future works.
\subsection{On the three-jet non-degeneracy condition}\label{discussion2}
By closely looking at the algebraic form of the sets $\Psi^*_m(n)$ for $n\in\{2,3,4,5\}$ and $m\in\{1,...,n-1\}$, which was developed in the previous sections, one easily sees that any function whose jet belongs to any of these sets must be $3$-jet degenerate. Therefore, if a function depending on a fixed number $n$ of degrees of freedom is three-jet non degenerate, it belongs to the complementary of all sets $\Psi^*_m(n)$, with $m\in\{1,...,n-1\}$. Since for fixed $n\in\{2,3,4,5\}$ the bad set $\sigma_n^5(I)$ is contained in the union of closed sets $\cup_{m\in\{1,...,n-1\}}\Psi^*_m(n)$, by Theorem \ref{nek2} one has that all three-jet non-degenerate functions depending on $n=2,3,4,5$ degrees of freedom are steep. We conjecture that for functions depending on $n\geq 6$ degrees of freedom the same result can be proved by closely looking at the algebraic form of the sets defining the bad set $\sigma_n^r(I)$, for a sufficiently high value of the order $r$. This would constitute an alternative strategy for proving the steepness of three-jet non-degenerate functions with respect to the one contained in \cite{Chierchia_Faraggiana_Guzzo_2019}.

 Finally, by following a similar reasoning as in subsection \ref{ngeq6}, for $r=3$ one obtains $\beta_1\leq 3$ for $n\geq 2$, so that the set of jets of order three satisying the conditions for steepness of Theorem \ref{nek2} is contained in the set of three-jet non degenerate jets. Therefore, three-jet non-degeneracy yields a wider set of steep functions with respect to the construction of Theorem \ref{nek2}. 

\section*{Acknowledgements} 
The present work has been developed autonomously by the author starting from an idea of G. Pinzari on the application of Nekhoroshev's algebraic conditions for steepness to the N-body problem. The author is grateful to G. Pinzari for the mathematical discussion, as well as for providing some unpublished notes containing some ideas about the four-jet condition, which was developed in complete form by Schirinzi and Guzzo in ref. \cite{Schirinzi_Guzzo_2013}. The author also wishes to thank L. Niederman for revising this work and for discussions and new ideas about further developements. 
The author would also like to acknowledge L. Biasco, L. Chierchia, F. Fass\`o, R. Feola, M. Guzzo, S. Mar\`o and J. Massetti for their useful remarks and suggestions.

\bibliographystyle{plainnat}
\bibliography{5-jet-condition-2}

\end{document}